\numberwithin{equation}{section}
\newtheorem{theorem}{Theorem}[section]
\newtheorem{lemma}[theorem]{Lemma}
\newtheorem{proposition}[theorem]{Proposition}
\newtheorem{remark}[theorem]{Remark}
\newcommand{\per}{\mathrm{per}}
\newcommand{\e}{\mathrm{e}}
\newcommand{\R}{\mathbb R}
\newcommand{\N}{\mathbb N}
\newcommand{\I}{\mathcal I}
\newcommand{\F}{\mathcal F}
\newcommand{\SL}{\mathcal S}
\newcommand{\M}{\mathcal M}
\newcommand{\A}{\mathcal A}
\newcommand{\U}{\mathcal U}
\newcommand{\Cc}{\mathcal C}
\title[Axisymmetric capillary water waves connecting to static unduloids]{Axisymmetric capillary water waves with vorticity and swirl connecting to static unduloid configurations}
\author{Anna-Mariya Otsetova}
\address{Anna-Mariya Otsetova\newline Department of Mathematics and Systems Analysis, Aalto University, P.O. Box 11100, 00076 Aalto, Finland}
\email{anna-mariya.otsetova@aalto.fi}
\author{Erik Wahlén}
\address{Erik Wahlén\newline Centre for Mathematical Sciences, Lund University, P.O. Box 118, 22100 Lund, Sweden}
\email{erik.wahlen@math.lu.se}
\author{Jörg Weber}
\address{Jörg Weber\newline Faculty of Mathematics, University of Vienna, Oskar-Morgenstern-Platz 1, 1090 Vienna, Austria}
\email{joerg.weber@univie.ac.at}
\subjclass[2020]{33E05, 35B07, 76B15 (primary), 76B45, 76B47}
\keywords{steady water waves; axisymmetric flows; vorticity; constant mean curvature; elliptic integrals}
\begin{document}
\begin{abstract}
We study steady axisymmetric water waves with general vorticity and swirl, subject to the influence of surface tension. Explicit solutions to such a water wave problem are static configurations where the surface is an unduloid, that is, a periodic surface of revolution with constant mean curvature. We prove that to any such configuration there connects a global continuum of non-static solutions by means of a global implicit function theorem. To prove this, the key is strict monotonicity of a certain function describing the mean curvature of an unduloid and involving complete elliptic integrals. From this point of view, this paper is an interesting interplay between water waves, geometry, and properties of elliptic integrals.
\end{abstract}
\maketitle

\section{Introduction}
We study axisymmetric water waves (waves travelling at constant speed on the surface of a fluid jet) with surface tension, modelled by assuming that the domain is bounded by a free surface on which capillary forces are acting, and that in cylindrical coordinates $(r,\vartheta,z)$ the domain and flow are independent of the azimuthal variable $\vartheta$. In the irrotational and swirl-free setting, such waves were studied numerically by Vanden-Broeck et al.~\cite{VandenBroeckMilohSpivack98} and Osborne and Forbes \cite{OsborneForbes01}, who found similarities to two-dimensional capillary waves, including overhanging profiles and limiting configurations with trapped bubbles at their troughs. The main motivation of the present paper is to make rigorous the numerical observation in \cite{VandenBroeckMilohSpivack98} that as another possible limiting behaviour static configurations are approached, that is, configurations with no flow at all; see \cite[Figure 6]{VandenBroeckMilohSpivack98}. While the classic way, in order to compute solutions numerically or construct them rigorously, is to bifurcate from laminar flows and then to continue along the bifurcation branch up to a limiting configuration, our approach goes in the opposite way: We start from a static, non-flat configuration and then construct rigorously a global continuum of solutions connecting to this static configuration. We point out that, for reasons we explain later, our construction is by means of the (global) implicit function theorem and \textit{not} by means of bifurcation methods.

It is well-known \cite{ErhardtWahlenWeber22,Saffman92} that the capillary axisymmetric water wave problem can be conveniently written in terms of the Stokes stream function $\Psi$, which gives rise to the velocity $\bm{u}$ through
\[\bm{u}=\frac{F(\Psi)}{r}\bm{e}_\vartheta-\nabla\times(\Psi\bm{e}_\vartheta/r)=\frac{\Psi_z}{r}\bm{e}_r+\frac{F(\Psi)}{r}\bm{e}_\vartheta-\frac{\Psi_r}{r}\bm{e}_z.\]
Here, the swirl $F$ is an arbitrary function of $\Psi$ and $(\bm{e}_r,\bm{e}_\vartheta,\bm{e}_z)$ is the usual basis with respect to cylindrical coordinates. With this at hand, we look for periodic solutions to the problem
\begin{subequations}\label{eq:OriginalEquation_Psi}
	\begin{align}
		\Delta^*\Psi&=-r^2\gamma(\Psi)-F(\Psi)F'(\Psi) &&\text{in }\Omega,\label{eq:OriginalEquation_Psi_PDE}\\
		\frac{\Psi_r^2+\Psi_z^2+F(\Psi)^2}{2r^2}-\sigma\kappa&=Q && \text{on }\partial\Omega_\mathcal{S},\\
		\Psi&=m &&\text{on }\partial\Omega_\mathcal{S},\\
		\Psi&=0 &&\text{on }\partial\Omega_\mathcal{C}.\label{eq:Psi_r=0}
	\end{align}
\end{subequations}
Here, $Q$ and $m$ are constants, $\sigma>0$ is the constant coefficient of surface tension, $\gamma$ is yet another arbitrary function of $\Psi$, and
\begin{align}\label{eq:Grad-Shafranov_op}
	\Delta^*\coloneqq\partial_r^2-\frac1r\partial_r+\partial_z^2
\end{align}
is the Grad--Shafranov operator. Moreover, $\Omega=\{(r,z)\in\R^2:0<r<\eta(z)\}$ is one half of one cross-section of the fluid domain, where to find $\eta$ is part of the problem. We denote its boundaries by $\partial\Omega_\mathcal{S}=\{(r,z)\in\R^2:r=\eta(z)\}$ (the free surface) and $\partial\Omega_\mathcal{C}=\{(r,z)\in\R^2:r=0\}$ (the centre line). Although the latter could be considered as part of the domain, it is sometimes convenient to consider it as a boundary due to the appearance of inverse powers of $r$ in the equations. Finally, $\kappa$ is the mean curvature of the surface of the fluid domain in $\R^3$, which results from rotating $\Omega$ around the cylinder axis $r=0$. The PDE \eqref{eq:OriginalEquation_Psi_PDE} appears in many applications in physics and is therefore known under various names: \textit{Hicks equation}, \textit{Bragg--Hawthorne equation}, \textit{Squire--Long equation}, or \textit{Grad--Shafranov equation}. While $F(\Psi)$ is the swirl, the vorticity vector $\bm{\omega}$ is in terms of $\Psi$, $\gamma$, and $F$ given by
\[\bm{\omega}=-\frac{1}{r}F'(\Psi)\Psi_z\bm{e}_r-\left(r\gamma(\Psi)+\frac{(FF')(\Psi)}{r}\right)\bm{e}_\vartheta+\frac{1}{r} F'(\Psi)\Psi_r\bm{e}_z.\]
For a much more thorough introduction to this problem and these equations we refer to \cite{ErhardtWahlenWeber22}.

The only regularity assumption that we shall impose on $\gamma$ and $F$ throughout is
\begin{align}\label{ass:regularity}
	\gamma\in C_{\text{loc}}^{1,1}(\R),\quad F\in C_{\text{loc}}^{2,1}(\R),\quad\|\gamma'\|_\infty<\infty,\quad\|(FF')'\|_\infty<\infty,
\end{align}
where here and in the following $C^{k,\alpha}$ ($0<\alpha\le1$, $k\in\N_0$) denotes Hölder spaces (Lipschitz if $\alpha=1$) as usual. (Compared to \cite{ErhardtWahlenWeber22} we need one derivative of $\gamma$ and $F$ less. This is because we shall apply the implicit function theorem in this paper, requiring only $C^1$-regularity of the nonlinear operator, while \cite{ErhardtWahlenWeber22} uses the Crandall--Rabinowitz theorem, requiring $C^2$-regularity of the nonlinear operator.)

Let us now present the plan of the paper. In Section \ref{sec:Unduloids} we investigate more closely static configurations, which exist provided
\begin{align}\label{ass:gamma(0),F(0)}
	\gamma(0)=F(0)=0
\end{align}
and the surface is a so-called \textit{unduloid}. These surfaces are periodic surfaces of revolution with constant mean curvature and are classical in geometry. We present some well-known facts about unduloids and then introduce a new set of two parameters, which classifies all unduloids and such that one parameter corresponds to its \enquote{size} and the other to its \enquote{shape}, while the corresponding period, importantly, only depends on the \enquote{size} parameter. The key property, which is crucial for the main theorem and the cornerstone of this paper, is \eqref{eq:kappa_monotone}. It says that the (constant) mean curvature of an unduloid, given in terms of complete elliptic integrals, is strictly monotone in the \enquote{shape} parameter when the \enquote{size} parameter is kept fixed. Then, in Section \ref{sec:Reformulation}, we present a reformulation of the equation, including the usual procedure of flattening the domain, in order to put the subsequent analysis on firm ground. Also, we reformulate the equations in the form \enquote{identity plus compact}, desiring the application of a \textit{global} implicit function theorem later on. Most of Section \ref{sec:Reformulation} is very similar to \cite{ErhardtWahlenWeber22}, so we shall only present the main ingredients, while referring the reader to \cite{ErhardtWahlenWeber22} for more details. Finally, in Section \ref{sec:MainResult} we state and prove the main result. For this result, we shall, except for the regularity assumption \eqref{ass:regularity} and the condition \eqref{ass:gamma(0),F(0)}, assume that the PDE operator corresponding to \eqref{eq:OriginalEquation_Psi_PDE} linearised at a given static configuration has trivial kernel. This additional assumption is certainly not true for all choices of $\gamma$, $F$, and the static configuration, but easily seen to be true for example in case $\gamma'(0)\le0$, $F'(0)=0$, and therefore, in particular, in the irrotational, swirl-free setting; see Proposition \ref{prop:cond_zero_eigenvalue}. Thus, our main result holds true in the case numerically studied in \cite{VandenBroeckMilohSpivack98}, and the present paper puts their observation on firm rigorous ground. Loosely speaking -- for a more precise statement we refer to Theorem \ref{thm:MainResult} -- our main result says that, under the above-mentioned conditions, the implicit function theorem can be applied at a given static solution, yielding a local curve of non-static solutions to \eqref{eq:OriginalEquation_Psi}. In fact, this curve can be extended to a global continuum of solutions where the continuum either
\begin{itemize}
	\item is unbounded, or
	\item loops back to the initial static configuration, or
	\item approaches a degenerate configuration, where the surface intersects the cylinder axis.
\end{itemize}
As customary for capillary problems, we cannot, unfortunately, eliminate the latter two alternatives. In the proof of the main theorem everything is reduced to studying the kernel of a certain linear second order ODE operator. A basis of this kernel can be constructed explicitly such that the first basis element is periodic, but odd, while the second is even, but not periodic. Consequently, the operator has trivial kernel viewed as acting on even and periodic functions, which is why we are then within the scope of the implicit function theorem. Notice, importantly, that in order to construct this second basis element the key property \eqref{eq:kappa_monotone} is crucially made use of -- without \eqref{eq:kappa_monotone} the proof of our result would simply not work!

This paper has its origins in a master’s thesis \cite{Otsetova23} written by the first author under the supervision of the other two authors. Note, however, that \cite{Otsetova23} only contains a local continuation result and is restricted to irrotational flow.

\section{Unduloids}\label{sec:Unduloids}
An important observation is that \eqref{eq:OriginalEquation_Psi} gives rise to \textit{static} solutions in the sense that $\Psi\equiv0$ solves \eqref{eq:OriginalEquation_Psi} (with $m=0$ and $Q=-\sigma\kappa$), provided \eqref{ass:gamma(0),F(0)} and
\[\kappa=\text{const.}\]
While \eqref{ass:gamma(0),F(0)} is just a constraint on the functions $\gamma$ and $F$ that we shall impose throughout, the characterisation of surfaces of revolution with constant mean curvature is classical in geometry and goes back to Delauney \cite{Delaunay}. Of special interest to us are the so-called \textit{unduloids}, which are periodic surfaces of revolution. A corresponding graph (that is rotated around the cylinder axis) is obtained by tracing out one focus of an ellipse as it rolls along the axis \cite{Eells}. A more modern discussion of unduloids can be found for example in \cite{HadzhilazovaMladenovIvailoOprea07}. Depending on two parameters $a$ and $c$ with $c>a>0$, any unduloid (up to translations in $z$) can be constructed by rotating the parametric curve $\R\ni t\mapsto(r,z)(t)$,
\begin{subequations}\label{eq:unduloid_parametric}
\begin{align}
	r(t)&=r_{a,c}(t)=\sqrt{p\sin(\mu t)+q},\\
	z(t)&=z_{a,c}(t)=aG\left(\frac{\mu t}{2}-\frac{\pi}{4},k\right)+cE\left(\frac{\mu t}{2}-\frac{\pi}{4},k\right),
\end{align}
\end{subequations}
around the $z$-axis. Here,
\begin{subequations}\label{eq:mu,p,q,k}
\begin{gather}
	\mu\coloneqq\frac{2}{a+c},\quad p\coloneqq\frac{c^2-a^2}{2},\quad q\coloneqq\frac{c^2+a^2}{2},\\
	k\coloneqq\frac{\sqrt{c^2-a^2}}{c},\label{eq:k}
\end{gather}
\end{subequations}
and $G$ (usually called $F$, but the letter $F$ is already used for the swirl in this paper) and $E$ are incomplete elliptic integrals of the first and second kind, respectively,
\[G(\phi,k)=\int_0^\phi\frac{du}{\sqrt{1-k^2\sin^2u}},\quad E(\phi,k)=\int_0^\phi\sqrt{1-k^2\sin^2u}\,du,\quad\phi\in\R,\; 0<k<1.\]
The period $P_{a,c}$ of the obtained unduloid (in the sense of periodicity along the $z$-axis) is 
\begin{align}\label{eq:period_ac}
	P_{a,c}=2cE(k)+2aK(k),
\end{align}
where $K(k)$ and $E(k)$ are complete elliptic integrals of the first and second kind, respectively,
\[K(k)=G(\tfrac\pi2,k)=\int_0^{\frac\pi2}\frac{du}{\sqrt{1-k^2\sin^2u}},\quad E(k)=E(\tfrac\pi2,k)=\int_0^{\frac\pi2}\sqrt{1-k^2\sin^2u}\,du.\]
Moreover, the (constant) mean curvature of the unduloid is given by 
\begin{align}\label{eq:meancurvature_ac}
	\kappa=\kappa_{a,c}=-\frac{2}{a+c}.
\end{align}
Let us remark that our convention is that a flat cylinder with radius $R>0$ has mean curvature $-1/R$, which explains the factor $-2$ in \eqref{eq:meancurvature_ac}. While this convention is not the one usually used in geometry (the factor $-2$ being absent), it is consistent with the literature on water waves, where the same convention is also used in \cite{ErhardtWahlenWeber22,VandenBroeckMilohSpivack98}; see also \eqref{eq:kappa} below.

For our purposes, it will be convenient to write the curve describing the unduloid not in the parametric form \eqref{eq:unduloid_parametric}, but in a graph form $r=r(z)$. Indeed, this can be done since both $G(\cdot,k)$ and $E(\cdot,k)$ are clearly strictly monotone. Let us call the corresponding description of that curve $r=\rho_{a,c}(z)$ for some function $\rho_{a,c}$ which is $P_{a,c}$-periodic in $z$. Clearly, $\rho_{a,c}$ is even in $z$ since, in \eqref{eq:unduloid_parametric}, $z=0$ corresponds to $t=\frac{\pi}{2\mu}$, and, with respect to $t=\frac{\pi}{2\mu}$, $r(t)$ is even while $z(t)$ is odd.

Having in mind that ultimately we want to work with functions with a \textit{fixed} period, but $P_{a,c}$ depends on both parameters $a$ and $c$, we strive to use some other parameters $b$ and $k$. These should be related to $a$ and $c$ such that $b$ determines the \enquote{size} and $k$ the \enquote{shape} of the unduloid, while variations in only $k$ leave the corresponding period $P=P(b)$ unchanged (the motivation for this will become more apparent in the proof of Lemma \ref{lma:T_injective}). Looking at how the formula \eqref{eq:unduloid_parametric} depends on $a$ and $c$, it comes at no surprise that for the \enquote{shape} parameter exactly the $k$ already introduced above can be used, since $k=\sqrt{1-(a/c)^2}\in(0,1)$ is in one-to-one correspondence with $a/c\in(0,1)$, which clearly determines the \enquote{shape} of the curve given in \eqref{eq:unduloid_parametric}. Now observe that \eqref{eq:period_ac} can also be written as
\begin{align}\label{eq:period_ac2}
	P_{a,c}=2c\left(E(k)+\sqrt{1-k^2}K(k)\right).
\end{align}
Thus, the natural definition for the \enquote{size} parameter $b$, in order for the period $P$ to be only dependent on $b$, is
\begin{align}\label{eq:b}
	b\coloneqq\frac{c}{g(k)}
\end{align}
with
\[g(k)\coloneqq\frac{\pi}{E(k)+\sqrt{1-k^2}K(k)},\]
normalised such that $b=1$ will correspond to period $2\pi$. It is now easy to check that \eqref{eq:k} and \eqref{eq:b} define a bijection between the parameters $(a,c)$ (satisfying $c>a>0$) and $(b,k)$ (satisfying $b>0$, $k\in(0,1)$), with inverse relations
\begin{align}\label{eq:inverse_ca_bk}
	c=bg(k),\quad a=c\sqrt{1-k^2}=bg(k)\sqrt{1-k^2}.
\end{align}
We can also rewrite the surface profile in terms of $(b,k)$. A closer look at \eqref{eq:unduloid_parametric} and \eqref{eq:mu,p,q,k} reveals that in fact
\[\rho_{a,c}=c\rho_{a/c,1}(\cdot/c).\]
With this and \eqref{eq:inverse_ca_bk} in mind, we see that $\rho_{a,c}=\eta^{b,k}$, where
\begin{align}\label{eq:eta_bk}
	\eta^{b,k}\coloneqq bg(k)\rho_{\sqrt{1-k^2},1}\left(\frac{\cdot}{bg(k)}\right).
\end{align}
Also, from combining \eqref{eq:eta_bk} and \eqref{eq:period_ac2} we see that $\eta^{b,k}$ is periodic with period $2\pi b$ only depending on $b$ as desired. Notice also that all $\eta^{b,k}$ are even functions of $z$.

In Figure \ref{fig:unduloid} the unduloid that is determined by a certain such profile $\eta^{b,k}$ is plotted. Let us remark here that, while we are only interested in $k\in(0,1)$ throughout, one can also say what happens in the limit $k\to0$ or $k\to1$. Indeed, $k=0$ corresponds to a flat cylinder with radius $b$, while $k=1$ corresponds to periodically repeated spheres with radius $\pi b$. Why do we not include these limiting configurations in our analysis? On the one hand, we comment on the qualitatively different case $k=0$ in Remark \ref{rem:k0=0}. On the other hand, periodically repeated spheres are already singular configurations: At the point where they touch each other they intersect the cylinder axis, and geometric quantities like mean curvature are not well-defined there.

\begin{figure}
	\includegraphics{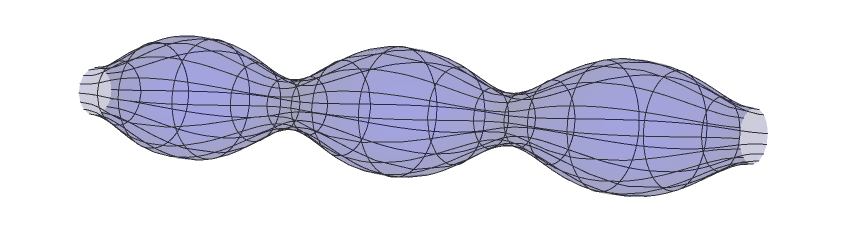}
	\caption{The unduloid corresponding to $\eta^{b,k}$ for parameter value $k=\sqrt{0.19}$. Notice that, since no scales are specified in the plot, the parameter $b$ is also left unspecified as it does not affect the shape of the unduloid.}
	\label{fig:unduloid}	
\end{figure}

Now importantly, by \eqref{eq:meancurvature_ac}, the unduloid corresponding to $\eta^{b,k}$ has (constant) mean curvature
\[\kappa^{b,k}=-\frac{2}{bg(k)(1+\sqrt{1-k^2})}=-2\cdot\frac{E(k)+\sqrt{1-k^2}K(k)}{\pi b(1+\sqrt{1-k^2})}.\]
Let us state already here what is in fact the crucial observation in this paper and what causes a certain linearised operator later to be invertible.
\begin{lemma}\label{lma:kappa_monotone}
	For $b>0$ and $k\in(0,1)$, we have
	\begin{align}\label{eq:kappa_monotone}
		\quad\partial_k\kappa^{b,k}\ne0.
	\end{align}
\end{lemma}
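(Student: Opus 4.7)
The plan is to exploit the fact that $b$ enters $\kappa^{b,k}$ only through a multiplicative factor $1/b$: writing $k' := \sqrt{1-k^2}$, the claim reduces to proving that
\[h(k) := \frac{E(k) + k' K(k)}{1 + k'}\]
has nowhere-vanishing derivative on $(0,1)$. I would compute $h'(k)$ directly, using the standard formulas $E'(k) = (E-K)/k$, $K'(k) = [E-(1-k^2)K]/[k(1-k^2)]$, and $(k')' = -k/k'$, together with the quotient rule. After collecting $E$- and $K$-coefficients and applying the two algebraic identities $(1+k')^2 + k^2 = 2(1+k')$ and $k^2 = (1-k')(1+k')$ (both immediate consequences of $k^2 + (k')^2 = 1$), the cancellations conspire to yield the compact form
\[h'(k) = \frac{2E(k) - (2-k^2)K(k)}{k\,k'\,(1+k')}.\]

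Since the denominator is strictly positive on $(0,1)$, it then suffices to show $f(k) := 2E(k) - (2-k^2)K(k)$ is non-zero on $(0,1)$. From $E(0) = K(0) = \pi/2$ one gets $f(0) = 0$, so a direct evaluation will not do; one must use monotonicity. I would therefore differentiate $f$ using the same formulas and again collect $E$- and $K$-coefficients; the $K$-coefficient collapses to $k$ and the $E$-coefficient to $-k/(1-k^2)$, giving the strikingly simple expression
\[f'(k) = -\frac{k\,[E(k) - (1-k^2)K(k)]}{1-k^2}.\]
The bracket is classically positive on $(0,1)$, as can be seen from the integral representation
\[E(k) - (1-k^2)K(k) = k^2 \int_0^{\pi/2} \frac{\cos^2 u}{\sqrt{1 - k^2\sin^2 u}}\,du.\]
Hence $f'(k) < 0$ on $(0,1)$, which combined with $f(0) = 0$ gives $f(k) < 0$ throughout $(0,1)$. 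Consequently $h'(k) < 0$, so $\partial_k \kappa^{b,k} = -2h'(k)/(\pi b) > 0$, which is stronger than \eqref{eq:kappa_monotone}.

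The conceptual difficulty is essentially nil once one believes the simplifications; the only real obstacle is the bookkeeping in the two successive derivative computations, where one has to trust that the rather ugly combinations of $E$, $K$, $k$, and $k'$ really do collapse. The success of both simplifications ultimately hinges on the same Pythagorean identity $k^2 + (k')^2 = 1$, which is why the computation closes so cleanly.
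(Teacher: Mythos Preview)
Your argument is correct, and up through the derivative formula
\[
\partial_k\kappa^{b,k}=-\frac{2}{\pi b}\cdot\frac{2E(k)-(2-k^2)K(k)}{k\,k'(1+k')}
\]
it coincides exactly with the paper's computation (the paper writes the numerator as $2(E-K)+k^2K$, which is the same expression). The only genuine divergence is in how the sign of the numerator $f(k)=2E(k)-(2-k^2)K(k)$ is established: the paper plugs in the hypergeometric power series for $E$ and $K$ and reads off that the combination equals a manifestly negative series $-2\pi\sum_{n\ge2}(\cdots)k^{2n}$, whereas you differentiate once more, obtain $f'(k)=-k[E-(1-k^2)K]/(1-k^2)$, and recognise the bracket as the positive integral $k^2\int_0^{\pi/2}\cos^2u\,(1-k^2\sin^2u)^{-1/2}\,du$, concluding via $f(0)=0$. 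Both routes are short and self-contained; the power-series argument has the minor advantage of giving the exact Taylor expansion of $f$ in one stroke, while your approach avoids the series bookkeeping entirely and stays at the level of the classical derivative identities and an elementary integral inequality.
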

\begin{proof}
	From the well-known formulas \cite{Lawden89}
	\begin{gather*}
		K'(k)=\frac{E(k)}{k(1-k^2)}-\frac{K(k)}{k},\quad E'(k)=\frac{E(k)-K(k)}{k},\\
		K(k)=\frac{\pi}{2}\sum_{n=0}^{\infty}\left(\frac{(2n)!}{2^{2n}(n!)^2}\right)^2k^{2n},\quad E(k)=\frac{\pi}{2}\sum_{n=0}^{\infty}\left(\frac{(2n)!}{2^{2n}(n!)^2}\right)^2\frac{k^{2n}}{1-2n}
	\end{gather*}
	we infer by direct computation that
	\[\partial_k\kappa^{b,k}=-2\cdot\frac{2(E(k)-K(k))+k^2K(k)}{\pi bk\sqrt{1-k^2}(1+\sqrt{1-k^2})},\]
	where
	\[2(E(k)-K(k))+k^2K(k)=-2\pi\sum_{n=2}^{\infty}\left(\frac{(2n!)}{2^{2n}(n!)^2}\right)^2\frac{n(n-1)}{(2n-1)^2}k^{2n}<0.\]
\end{proof}

\section{Reformulation of the equations}\label{sec:Reformulation}
Let us now state a reformulation of the original equations \eqref{eq:OriginalEquation_Psi} to which we will later apply a global implicit function theorem. For the most part, this reformulation was presented and explained in detail in \cite{ErhardtWahlenWeber22}, so we only provide a summary here, but explain some very minor deviations from \cite{ErhardtWahlenWeber22}. These are due to the fact that in \cite{ErhardtWahlenWeber22} the mean $\langle\eta\rangle$ of the surface profile $\eta$ is fixed while the Bernoulli constant $Q$ varies, whereas in this paper we fix $Q$ and allow for varying $\langle\eta\rangle$.
\subsection{Ni's trick}
In order to get around the apparent coordinate singularity $1/r$ appearing in \eqref{eq:OriginalEquation_Psi_PDE} through the operator \eqref{eq:Grad-Shafranov_op}, a common trick going back to Ni \cite{Ni} is to consider $\psi$ instead of $\Psi$, related to each other via
\[\Psi=r^2\psi,\]
which on its own makes \eqref{eq:Psi_r=0} redundant. The advantage of $\psi$ is that it solves
\begin{subequations}
	\begin{align}
		\psi_{rr}+\frac3r\psi_r+\psi_{zz}&=-\gamma(r^2\psi)-\frac{1}{r^2}(FF')(r^2\psi)&&\text{in }\Omega,\label{eq:OriginalEquation_psi_PDE}\\
		\frac{r^2(\psi_r^2+\psi_z^2)}{2}+\frac{F(r^2\psi)^2}{2r^2}+\frac{2m\psi_r}{r}+\frac{2m^2}{r^4}-\sigma\kappa&=Q &&\text{on }\partial\Omega_\mathcal{S},\label{eq:OriginalEquation_psi_Bernoulli}\\
		\psi&=\frac{m}{r^2} &&\text{on }\partial\Omega_\mathcal{S},\label{eq:OriginalEquation_psi_top}
	\end{align}
\end{subequations}
where the operator $\partial_r^2+\frac3r\partial_r$ is the radial Laplacian in four dimensions, and due to the constraint $F(0)=0$ no other singular terms appear. Thus, for rigorous investigations, one can think of $\psi$ as a function on $\R^5$ radially symmetric in the first four components. More precisely, for a function $\psi=\psi(r,z)$ on some $\Omega\subset[0,\infty)\times\R$ we denote by $\I\psi$ the function given by
\[\I\psi(x,z)=\psi(|x|,z)\]
and defined on the set $\Omega^\I$, which results from rotating $\Omega$ around the $z$-axis in $\R^5=\{(x,z)\in\R^4\times\R\}$. At the level of $\I\psi$, \eqref{eq:OriginalEquation_psi_PDE} and \eqref{eq:OriginalEquation_psi_top} read
\begin{subequations}\label{eq:PDE_in_5D}
	\begin{align}
		\Delta_5\I\psi&=-\gamma(|x|^2\I\psi)-\frac{1}{|x|^2}(FF')(|x|^2\I\psi)&\text{in }\Omega^\I,\\
		\I\psi&=\frac{m}{|x|^2}&\text{on }\partial\Omega_\mathcal{S}^\I,
	\end{align}	
\end{subequations}
with $\Delta_5$ denoting the Laplacian in five dimensions.

As for \eqref{eq:OriginalEquation_psi_Bernoulli}, we do not have to take a detour and increase the dimension, since in \eqref{eq:OriginalEquation_psi_Bernoulli} no singular term appears, at least whenever the surface does not intersect the symmetry axis.

\subsection{Underlying trivial flow}
For given $\lambda\in\R$, $d>0$, we consider the underlying flow (or trivial solution) $\psi^{\lambda,d}$, defined as
\begin{align}\label{eq:psi^lambda}
	\psi^{\lambda,d}(s)\coloneqq\psi(sd),\quad s\in[0,1],
\end{align}
where $\psi$ is the unique solution of
\begin{subequations}\label{eq:trivial}
	\begin{align}
		\psi_{rr}+\frac3r\psi_r&=-\gamma(r^2\psi)-\frac{1}{r^2}(FF')(r^2\psi)\quad\text{on }(0,d],\\
		\psi(0)&=\lambda,\\
		\psi_r(0)&=0,
	\end{align}
\end{subequations}
existing due to \eqref{ass:regularity}. The corresponding fluid domain is a cylinder with radius $d$, while the equation $\bm{u}=-2\lambda\bm{e}_z$ at $r=0$ for the corresponding fluid velocity serves as the physical interpretation of $\lambda$.

\subsection{Flattening}
In order to do rigorous analysis, we have to transform the domain into a fixed one. For \eqref{eq:PDE_in_5D} we do this by considering the flattening $(x,z)\mapsto(y,z)=(x/\eta(z),z)$ and for \eqref{eq:OriginalEquation_psi_Bernoulli} by $(r,z)\mapsto(s,z)=(r/\eta(z),z)$, assuming -- as we shall always throughout -- that $\eta>0$. Moreover, a more convenient variable to work with is
\[\phi=\bar\psi-\frac{\langle\eta\rangle^2}{\eta^2}\psi^{\lambda,\langle\eta\rangle},\]
where $\bar\psi$ is related to $\psi$ via $\bar\psi(s,z)=\psi(r,z)$, since $\phi$ is subject to Dirichlet boundary conditions on the surface. Indeed, in terms of $\phi$ and with
\[m=m(\lambda,\langle\eta\rangle),\quad m(\lambda,d)\coloneqq d^2\psi^{\lambda,d}(1)\]
the equations read
\begin{subequations}\label{eq:PDE+Dirichlet_flattened}
	\begin{align}
		L^\eta\I\phi&=-\gamma\left(\eta^2|y|^2\left(\I\phi+\frac{\langle\eta\rangle^2}{\eta^2}\I\psi^{\lambda,\langle\eta\rangle}\right)\right)\nonumber\\
		&\phantom{=\;}-\frac{1}{\eta^2|y|^2}(FF')\left(\eta^2|y|^2\left(\I\phi+\frac{\langle\eta\rangle^2}{\eta^2}\I\psi^{\lambda,\langle\eta\rangle}\right)\right)-L^\eta\frac{\langle\eta\rangle^2\I\psi^{\lambda,\langle\eta\rangle}}{\eta^2}&\text{in }\Omega_0^\I,\\
		\I\phi&=0&\text{on }|y|=1,
	\end{align}
\end{subequations}
and (since $\phi=\phi_z=0$ on $s=1$)
\begin{align}\label{eq:Bernoulli_flattened}
	&\frac{\left(\phi_s+\frac{\langle\eta\rangle^2}{\eta^2}\psi^{\lambda,\langle\eta\rangle}_s\right)^2+\eta_z^2\left(\phi_s+\frac{2m(\lambda,\langle\eta\rangle)+\langle\eta\rangle^2\psi^{\lambda,\langle\eta\rangle}_s}{\eta^2}\right)^2}{2}\nonumber\\
	&+\frac{F(m(\lambda,\langle\eta\rangle))^2}{2\eta^2}+\frac{2m(\lambda,\langle\eta\rangle)\left(\phi_s+\frac{\langle\eta\rangle^2}{\eta^2}\psi^{\lambda,\langle\eta\rangle}_s\right)}{\eta^2}+\frac{2m(\lambda,\langle\eta\rangle)^2}{\eta^4}-\sigma\kappa[\eta]=Q\quad\text{ on }s=1.
\end{align}
Above, $\Omega_0\coloneqq [0,1)\times\R$,
\begin{align}\label{eq:kappa}
	\kappa[\eta]\coloneqq\frac{\eta_{zz}}{(1+\eta_z^2)^{3/2}}-\frac{1}{\eta\sqrt{1+\eta_z^2}}
\end{align}
is the mean curvature of the surface of revolution corresponding to $\eta$, and
\[L^\eta\tilde\psi\coloneqq\tilde\psi_{zz}+\frac{1}{\eta^2}\left(\tilde\psi_{y_iy_i}-2\eta\eta_zy_i\tilde\psi_{y_iz}+\eta_z^2y_iy_j\tilde\psi_{y_iy_j}-(\eta\eta_{zz}-2\eta_z^2)y_i\tilde\psi_{y_i}\right);\]
here, repeated indices are summed over. It is straightforward to see that $L^\eta$ is a uniformly elliptic operator, provided $\eta$ is uniformly bounded from below by a positive constant.

\subsection{Fixed solution and functional-analytic setup}\label{sec:FAsetup}
The goal later will be to apply a global implicit function theorem to obtain a continuum of solutions. To this end, let us now state the fixed solution from which the global continuum emanates. We then describe the precise functional-analytic setup and rewrite the equation in the form \enquote{identity plus compact}, which	will be advantageous for the global continuation argument to come later.

First, we already have found in Section \ref{sec:Unduloids} a two-parameter family of solutions: $\psi\equiv0$ and $\eta=\eta^{b,k}$ (and corresponding constants $m$ and $Q$). Let us take any fixed parameter values $b_0>0$, $k_0\in(0,1)$. As a functional-analytic setup we take a fixed $0<\alpha<1$ and introduce the Banach space
\[X\coloneqq\left\{(\eta,\phi)\in C_{\per,\e}^{2,\alpha}(\R)\times C_{\per,\e}^{0,\alpha}(\overline{\Omega_0}):\phi=0\text{ on }s=1\right\},\]
equipped with the canonical norm
\[\|(\eta,\phi)\|_X=\|\eta\|_{C^{2,\alpha}_\per(\R)}+\|\phi\|_{C^{0,\alpha}_\per(\overline{\Omega_0})}.\]
Here, the indices \enquote{$\per$} and \enquote{$\e$} denote $2\pi b_0$-periodicity and evenness (in $z$ with respect to $z=0$). Now, the point $(\lambda,\eta,\phi)=(0,\eta^{b_0,k_0},0)\in\R\times X$ solves \eqref{eq:PDE+Dirichlet_flattened} and \eqref{eq:Bernoulli_flattened}. Indeed, from \eqref{eq:psi^lambda} and \eqref{eq:trivial}, together with \eqref{ass:gamma(0),F(0)} of course, it is evident that $\psi^{0,d}\equiv0$ and thus also $m(0,d)=0$ for any $d>0$. Therefore, $(0,\eta^{b_0,k_0},0)$ solves \eqref{eq:PDE+Dirichlet_flattened}, and also \eqref{eq:Bernoulli_flattened} with $Q=-\sigma\kappa^{b_0,k_0}$. In the following, we shall always consider a \textit{fixed} Bernoulli constant with value
\begin{align}\label{eq:fixed_Q}
	Q=Q^{b_0,k_0}\coloneqq-\sigma\kappa^{b_0,k_0}.
\end{align}
Because of this and \eqref{eq:kappa_monotone}, our main result will be proved by the implicit function theorem and will \textit{not} be a bifurcation result -- other $\eta^{b,k}$ in the \enquote{vicinity} of $\eta^{b_0,k_0}$ have a different period or correspond to a different Bernoulli constant.

Let us now state the reformulation as \enquote{identity plus compact}. First, for
\[(\lambda,\eta,\phi)\in\R\times\U\coloneqq\R\times\{(\eta,\phi)\in X:\eta>0\text{ on }\R\},\]
we let $\A(\lambda,\eta,\phi)=\I^{-1}\varphi$, where $\varphi\in C^{2,\alpha}(\overline{\Omega_0^\I})$ is the unique solution of
\begin{align*}
	L^\eta\varphi&=-L^\eta\frac{\langle\eta\rangle^2\I\psi^{\lambda,\langle\eta\rangle}}{\eta^2}-\gamma\left(\eta^2|y|^2\left(\I\phi+\frac{\langle\eta\rangle^2}{\eta^2}\I\psi^{\lambda,\langle\eta\rangle}\right)\right)\\
	&\phantom{=\;}-\frac{1}{\eta^2|y|^2}(FF')\left(\eta^2|y|^2\left(\I\phi+\frac{\langle\eta\rangle^2}{\eta^2}\I\psi^{\lambda,\langle\eta\rangle}\right)\right)&\text{in }\Omega_0^\I,\\
	\varphi&=0&\text{on }|y|=1.
\end{align*}
Second, we rewrite \eqref{eq:Bernoulli_flattened}, substituting $\phi$ for $\A$:
\begin{align*}
	&\eta-\eta_{zz}\\
	&=\eta-\sigma^{-1}(1+\eta_z^2)^{3/2}\Bigg(\frac{\sigma}{\eta\sqrt{1+\eta_z^2}}+\frac{\left(\A_s+\frac{\langle\eta\rangle^2}{\eta^2}\psi^{\lambda,\langle\eta\rangle}_s\right)^2+\eta_z^2\left(\A_s+\frac{2m(\lambda,\langle\eta\rangle)+\langle\eta\rangle^2\psi^{\lambda,\langle\eta\rangle}_s}{\eta^2}\right)^2}{2}\\
	&\omit\hfill$\displaystyle+\frac{F(m(\lambda,\langle\eta\rangle))^2}{2\eta^2}+\frac{2m(\lambda,\langle\eta\rangle)\left(\A_s+\frac{\langle\eta\rangle^2}{\eta^2}\psi^{\lambda,\langle\eta\rangle}_s\right)}{\eta^2}+\frac{2m(\lambda,\langle\eta\rangle)^2}{\eta^4}-Q\Bigg)$
\end{align*}
on $s=1$. Compared to \cite{ErhardtWahlenWeber22} the difference here is that we add $\eta$ on both sides; this is because we do \textit{not} fix the mean of $\eta$ here and therefore rather invert $1-\partial_z^2$ instead of $\partial_z^2$ in what follows.

Putting everything together, we reformulate \eqref{eq:PDE+Dirichlet_flattened}, \eqref{eq:Bernoulli_flattened} as 
\begin{align}\label{eq:F=0}
	\F(\lambda,\eta,\phi)=0
\end{align}
for $(\lambda,\eta,\phi)\in\R\times\U$, where
\[\F\colon\R\times\U\to X,\;\F(\lambda,\eta,\phi)=(\eta,\phi)-\M(\lambda,\eta,\phi),\]
with $\M=(\M^1,\M^2)$,
\begin{align*}
	\M^1(\lambda,\eta,\phi)&\coloneqq(1-\partial_z^2)^{-1}\Bigg(\eta-\sigma^{-1}(1+\eta_z^2)^{3/2}\Bigg(\frac{\sigma}{\eta\sqrt{1+\eta_z^2}}\\
	&\phantom{\coloneqq\;}+\frac{\left(\SL\A_s+\frac{\langle\eta\rangle^2}{\eta^2}\SL\psi^{\lambda,\langle\eta\rangle}_s\right)^2+\eta_z^2\left(\SL\A_s+\frac{2m(\lambda,\langle\eta\rangle)+\langle\eta\rangle^2\SL\psi^{\lambda,\langle\eta\rangle}_s}{\eta^2}\right)^2}{2}\\
	&\phantom{\coloneqq\;}+\frac{F(m(\lambda,\langle\eta\rangle))^2}{2\eta^2}+\frac{2m(\lambda,\langle\eta\rangle)\left(\SL\A_s+\frac{\langle\eta\rangle^2}{\eta^2}\SL\psi^{\lambda,\langle\eta\rangle}_s\right)}{\eta^2}+\frac{2m(\lambda,\langle\eta\rangle)^2}{\eta^4}-Q\Bigg)\Bigg),\\
	\M^2(\lambda,\eta,\phi)&\coloneqq\A(\lambda,\eta,\phi).
\end{align*}
Here, $\SL$ is just shorthand for the evaluation operator on $s=1$. As in \cite{ErhardtWahlenWeber22}, \eqref{eq:F=0} is equivalent to \eqref{eq:PDE+Dirichlet_flattened}, \eqref{eq:Bernoulli_flattened}, and it is straightforward to see that $\M$ is compact (on subsets of $\R\times X$ on which $\eta$ is uniformly bounded from below by a positive constant) as it \enquote{gains a derivative}.

\section{The global continuum of solutions}\label{sec:MainResult}
The goal of this section is to prove the following main result, consisting of a local and a global statement.
\begin{theorem}\label{thm:MainResult}
	Let $b_0>0$, $k_0\in(0,1)$. Assume \eqref{ass:regularity}, \eqref{ass:gamma(0),F(0)}, and that
	\begin{align}\label{ass:zero_eigenvalue}
		0\text{ is not a Dirichlet eigenvalue of }\Delta^*+r^2\gamma'(0)+F'(0)^2\text{ on }\{(r,z)\in\R^2:0<r<\eta^{b_0,k_0}(z)\}.
	\end{align}
	Then it holds that:
	\begin{enumerate}[label=(\alph*)]
		\item There exists a neighbourhood $(-\delta,\delta)\times V$ of $(0,\eta^{b_0,k_0},0)$ in $\R\times X$ such that for each $\lambda\in(-\delta,\delta)$ there exists a unique point $\nu(\lambda)\in V$ satisfying $\F(\lambda,\nu(\lambda))=0$. Moreover, $\nu\in C^1(-\delta,\delta)$ with $\nu(0)=(\eta^{b_0,k_0},0)$.
		\item Let $\Sigma$ denote the set of all solutions to \eqref{eq:F=0} and $\Cc$ be the (connected) component of $\Sigma$ that contains the local curve of solutions $(\lambda,\nu(\lambda))$, $\lambda\in(-\delta,\delta)$, obtained in (a). Then one of the following three statements is true:
		\begin{enumerate}[label=(\roman*)]
			\item $\Cc$ can be written as $\Cc=\{(0,\eta^{b_0,k_0},0)\}\cup\Cc^+\cup\Cc^-$ with $\Cc^+\cap\Cc^-=\emptyset$ and $\Cc^+$, $\Cc^-$ both unbounded in the sense that for any $\#\in\{+,-\}$ one of the following alternatives occurs:
			\begin{enumerate}[label=(\greek*)]
				\item $\sup_{(\lambda,\eta,\phi)\in\Cc^\#}|\lambda|=\infty$;
				\item $\sup_{(\lambda,\eta,\phi)\in\Cc^\#}\|\eta\|_{C^{2,\alpha}_\per(\R)}=\infty$;
				\item $\sup_{(\lambda,\eta,\phi)\in\Cc^\#}\|r^{-(2\alpha+1)/5}\omega^\vartheta\|_{L^{5/(2-\alpha)}(\tilde\Omega_\eta)}=\infty$, where $\omega^\vartheta=-r\gamma(\Psi)-(FF')(\Psi)/r$ is the angular component of the vorticity and $\tilde\Omega_\eta$ is a $2\pi b_0$-periodic instance of the axially symmetric fluid domain in $\R^3$ corresponding to $\eta$.
			\end{enumerate}
			\item $\Cc\setminus\{(0,\eta^{b_0,k_0},0)\}$ is connected.
			\item $\inf_{(\lambda,\eta,\phi)\in\Cc}\inf_\R\eta=0$, that is, intersection of the surface profile with the cylinder axis occurs.
		\end{enumerate}
	\end{enumerate}
\end{theorem}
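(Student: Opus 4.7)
The plan is to prove (a) by the implicit function theorem applied to $\F$ at $(\lambda,\eta,\phi)=(0,\eta^{b_0,k_0},0)$, and (b) by invoking a global continuation theorem for maps of the form $\F=\mathrm{Id}-\M$ with $\M$ compact (of Healey--Simpson or Alexander--Yorke type, as in \cite{ErhardtWahlenWeber22}). Since $\F(0,\eta^{b_0,k_0},0)=0$ and $\F$ is $C^1$ on $\R\times\U$, the local step reduces to showing that $D_{(\eta,\phi)}\F(0,\eta^{b_0,k_0},0)=\mathrm{Id}-D_{(\eta,\phi)}\M(0,\eta^{b_0,k_0},0)$ is an isomorphism on $X$; by compactness of $D_{(\eta,\phi)}\M$ this operator is Fredholm of index zero, so it suffices to prove trivial kernel.

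Let $(\dot\eta,\dot\phi)\in X$ be a kernel element. Because $\psi^{0,d}\equiv 0$, $m=0$, and $\phi=0$ at the base, every term in $\F$ that is at least quadratic in $(\phi,\psi^{\lambda,\langle\eta\rangle},m)$ drops out of the linearisation. Computing the PDE component and translating back through Ni's trick via $\dot\Psi=r^2\dot\psi$, together with $(FF')'(0)=F'(0)^2$ (using \eqref{ass:gamma(0),F(0)}), one obtains precisely
\[
\Delta^*\dot\Psi+r^2\gamma'(0)\dot\Psi+F'(0)^2\dot\Psi=0\text{ in }\{(r,z)\in\R^2:0<r<\eta^{b_0,k_0}(z)\},\quad\dot\Psi=0\text{ on }r=\eta^{b_0,k_0}(z).
\]
By \eqref{ass:zero_eigenvalue} this forces $\dot\Psi\equiv 0$, hence $\dot\phi\equiv 0$. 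Feeding $\dot\phi=0$ into the linearised Bernoulli equation annihilates every fluid contribution (each contains a factor of $\phi$, $\psi^{0,d}$, $F(m)$, or $m$, all vanishing at the base), leaving the scalar ODE
\[
\kappa'[\eta^{b_0,k_0}]\dot\eta=0
\]
on $\R$, a linear second-order equation with non-vanishing leading coefficient $(1+(\partial_z\eta^{b_0,k_0})^2)^{-3/2}$.

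The main obstacle is to rule out non-trivial even $2\pi b_0$-periodic solutions of this ODE. Its kernel is two-dimensional, so it suffices to exhibit a fundamental system. Translation invariance supplies one element, $v_1:=\partial_z\eta^{b_0,k_0}$, which is $2\pi b_0$-periodic but odd. For a second, linearly independent element the crucial ingredient is Lemma \ref{lma:kappa_monotone}: since $\partial_k\kappa^{b_0,k_0}\ne 0$, the scalar implicit function theorem applied to $\kappa^{b,k}=\kappa^{b_0,k_0}$ yields a smooth curve $b\mapsto k(b)$ near $b_0$ with $k(b_0)=k_0$, along which the unduloid $\eta^{b,k(b)}$ has constant mean curvature $\kappa^{b_0,k_0}$ but period $2\pi b\ne 2\pi b_0$ for $b\ne b_0$. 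Differentiating $\kappa[\eta^{b,k(b)}]\equiv\kappa^{b_0,k_0}$ at $b=b_0$ produces the desired $v_2:=\tfrac{d}{db}\big|_{b_0}\eta^{b,k(b)}$; it is even (as each $\eta^{b,k}$ is), and differentiating the periodicity identity $\eta^{b,k(b)}(z+2\pi b)=\eta^{b,k(b)}(z)$ at $b=b_0$ gives
\[
v_2(z)-v_2(z+2\pi b_0)=2\pi\,\partial_z\eta^{b_0,k_0}(z)\not\equiv 0,
\]
which rules out $2\pi b_0$-periodicity of $v_2$. Any even $2\pi b_0$-periodic kernel element $\alpha v_1+\beta v_2$ must then have $\alpha=0$ by evenness and $\beta=0$ by periodicity, completing the proof of trivial kernel and hence of (a).

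For (b), the compactness of $\M$ established in Section \ref{sec:Reformulation} places us in the scope of a global continuation theorem for $\F=\mathrm{Id}-\M$, extending the local curve from (a) to a connected component $\Cc\subseteq\Sigma$ satisfying the trichotomy (i)--(iii); alternative (iii) corresponds to $\Cc$ approaching $\partial\U=\{\inf\eta=0\}$. To refine unboundedness of $\Cc$ in $\R\times X$ into the three explicit alternatives $(\alpha)$--$(\gamma)$ of (i) one bootstraps via Schauder estimates, exactly as in \cite{ErhardtWahlenWeber22}: a priori control of $|\lambda|$, $\|\eta\|_{C^{2,\alpha}_\per(\R)}$, and $\inf\eta>0$ reduces control of $\|\phi\|_{C^{0,\alpha}_\per(\overline{\Omega_0})}$ to control of the angular vorticity $\omega^\vartheta$ in the stated $L^{5/(2-\alpha)}$-norm. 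Modulo this adaptation of known machinery, the only genuinely novel step --- and the only place where \eqref{eq:kappa_monotone} is used --- is the kernel analysis above.
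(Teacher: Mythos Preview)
Your proof is correct and follows essentially the same strategy as the paper: reduce (a) to trivial kernel of $D_{(\eta,\phi)}\F$ at the base via the Fredholm property, use \eqref{ass:zero_eigenvalue} to kill $\dot\phi$, and then analyse the second-order ODE $\kappa'[\eta^{b_0,k_0}]\dot\eta=0$ by exhibiting the fundamental system $\{\partial_z\eta^{b_0,k_0},\,v_2\}$. Your $v_2=\tfrac{d}{db}\big|_{b_0}\eta^{b,k(b)}$, obtained from the scalar implicit function theorem applied to $\kappa^{b,k}=\kappa^{b_0,k_0}$, coincides with the paper's explicit linear combination $\partial_b\eta^{b_0,k_0}-\tfrac{\partial_b\kappa^{b_0,k_0}}{\partial_k\kappa^{b_0,k_0}}\partial_k\eta^{b_0,k_0}$; your non-periodicity argument (differentiating the identity $\eta^{b,k(b)}(z+2\pi b)=\eta^{b,k(b)}(z)$) is a clean alternative to the paper's explicit computation $\partial_b\eta^{b_0,k_0}(z)=(\bar\eta(z)-z\bar\eta_z(z))/b_0$.
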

While (a) follows from an application of the standard (local) implicit function theorem whose hypotheses we shall prove in the rest of this section, part (b) easily follows once (a) is proved. Indeed, all is needed is that the nonlinear operator $\F$ is amenable to degree methods, which is clearly the case for us since we put $\F$ in the classical form \enquote{identity plus compact} in Subsection \ref{sec:FAsetup}. For details on a general global implicit function theorem we refer to \cite[Chapter II.6]{Kielhoefer}.

The sense in which \enquote{unboundedness} is to be understood in alternative (i) has been sharpened to ($\alpha$)--($\gamma$) by means of a priori estimates. These are exactly the same as in \cite[Proof of Theorem 4]{ErhardtWahlenWeber22}, so we omit the details here.

It would certainly be desirable to rule out alternatives (ii) and (iii) above. Typically, as is well-known in bifurcation theory for water waves, one would need to perform a nodal analysis utilising sharp maximum principles in order to eliminate them. However, in the presence of capillarity such arguments appear to be unavailable. 

So let us now turn to the hypotheses of the standard implicit function theorem. First, it is straightforward to see that $\F$ is of class $C^1$ by virtue of \eqref{ass:regularity}. It therefore remains to prove invertibility of the linearised operator
\[\F_{(\eta,\phi)}(0,\eta^{b_0,k_0},0)\colon X\to X.\]
Let us denote by $\F^1=\eta-\M^1$ and $\F^2=\phi-\M^2$ the two components of $\F$. First we directly observe that $\F^1_\phi(0,\eta^{b_0,k_0},0)=0$, since all terms in $\F^1$ involving $\phi$ are of quadratic order; recall that $\psi^{\cdot,\cdot}=0$, $m=0$, and $\A=0$ at the point $(0,\eta^{b_0,k_0},0)$ at which we differentiate, and that $F(0)=0$. Of course, this is just due to the fact that in the original, physical Bernoulli equation the velocity is \textit{squared}.

Thinking of $\F_{(\eta,\phi)}(0,\eta^{b_0,k_0},0)$ as a $2\times 2$-matrix acting on $(\eta,\phi)$, it is therefore sufficient to show that both diagonal entries $\F^1_\eta(0,\eta^{b_0,k_0},0)$ and $\F^2_\phi(0,\eta^{b_0,k_0},0)$ are invertible on $C_{\per,\e}^{2,\alpha}(\R)$ and $\{\phi\in C_{\per,\e}^{0,\alpha}(\overline{\Omega_0}):\phi=0\text{ on }s=1\}$, respectively. Let us first compute
\[\F^2_\phi(0,\eta^{b_0,k_0},0)\phi=\phi-\A_\phi(0,\eta^{b_0,k_0},0)\phi,\]
with $\varphi=\I[\A_\phi(0,\eta^{b_0,k_0},0)\phi]$ the solution of
\begin{align*}
	L^{\eta^{b_0,k_0}}\varphi&=-\left(\gamma'(0)(\eta^{b_0,k_0})^2|y|^2+F'(0)^2\right)\I\phi&\text{in }\Omega_0^\I,\\
	\varphi&=0&\text{on }|y|=1.
\end{align*}
Clearly, $\F^2_\phi(0,\eta^{b_0,k_0},0)$ also admits the form \enquote{identity plus compact}, whence invertibility reduces to injectivity. But, for general $\gamma'(0),F'(0)\in\R$, this is really an assumption that we have to make. More precisely, we have to assume that
\begin{align}\label{ass:zero_eigenvalue_2}
	0\text{ is not a Dirichlet eigenvalue of }L^{\eta^{b_0,k_0}}+\gamma'(0)(\eta^{b_0,k_0})^2|y|^2+F'(0)^2,
\end{align}
viewed as an operator acting on functions $\varphi=\varphi(y,z)$ defined on $\Omega_0^\I$ and radial in $y$. Translating back to $\Psi$, the assumption \eqref{ass:zero_eigenvalue_2} can also be expressed as \eqref{ass:zero_eigenvalue}.

In general, one has to examine whether this assumption is satisfied, given values of $\gamma'(0),F'(0)\in\R$, and parameter values $b_0>0$, $k_0\in(0,1)$. The following proposition provides some easy observations in this direction.
\begin{proposition}\label{prop:cond_zero_eigenvalue}
	Assumption \eqref{ass:zero_eigenvalue}
	\begin{enumerate}[label=(\alph*)]
		\item always holds in case $\gamma'(0)\le0$, $F'(0)=0$, and therefore, in particular, in the irrotational, swirl-free case $\gamma=F\equiv0$;
		\item is generic in the sense that it holds for almost all values of $(\gamma'(0),F'(0),b_0,k_0)\in\R\times\R\times(0,\infty)\times(0,1)$.
	\end{enumerate}
\end{proposition}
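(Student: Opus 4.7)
The plan is to reduce both parts to the five-dimensional reformulation via Ni's trick. Writing $\Psi = r^2 \psi$ and treating $\psi$ as a radial function on $\R^4 \times \R$ through $\I\psi$, a direct computation gives
\[
\Delta^* \Psi + (r^2 \gamma'(0) + F'(0)^2) \Psi = r^2 \bigl[\Delta_5 \I\psi + (|x|^2 \gamma'(0) + F'(0)^2) \I\psi\bigr],
\]
so eigenfunctions $\Psi$ as in \eqref{ass:zero_eigenvalue} correspond bijectively to radial $2\pi b_0$-periodic-in-$z$ Dirichlet solutions $v = \I\psi$ of
\[
\Delta_5 v + (|x|^2 \gamma'(0) + F'(0)^2) v = 0 \quad \text{on } \Omega^\I = \{(x,z) \in \R^4 \times \R : |x| < \eta^{b_0,k_0}(z)\}.
\]

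For part (a), given such a $v$, I would restrict to a single period $\Omega^\I_P$, multiply the equation by $v$, and integrate by parts. The Dirichlet condition on the lateral boundary $\{|x| = \eta^{b_0,k_0}(z)\}$ together with periodicity in $z$ cancels all boundary terms, yielding
\[
\int_{\Omega^\I_P} |\nabla v|^2 \, dx \, dz = \int_{\Omega^\I_P} \bigl( |x|^2 \gamma'(0) + F'(0)^2 \bigr) v^2 \, dx \, dz.
\]
Under the hypotheses $F'(0) = 0$, $\gamma'(0) \le 0$, the right-hand side is non-positive while the left-hand side is non-negative, so both must vanish; in particular $v$ is constant on $\Omega^\I_P$, and the Dirichlet condition forces $v \equiv 0$, hence $\Psi \equiv 0$.

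For part (b), the argument is a standard discrete-spectrum count. For fixed $(\gamma'(0), b_0, k_0) \in \R \times (0,\infty) \times (0,1)$, the operator $-\Delta_5 - |x|^2 \gamma'(0)$ acting on radial functions on one period of $\Omega^\I$ with Dirichlet conditions on $\{|x| = \eta^{b_0,k_0}(z)\}$ and periodicity in $z$ is self-adjoint and bounded from below on the natural weighted $L^2$-space and, being posed on a bounded domain, has compact resolvent; therefore its spectrum is a discrete sequence $\{\mu_n\}_{n \in \N} \subset \R$. By the correspondence above, \eqref{ass:zero_eigenvalue} fails precisely when $F'(0)^2 = \mu_n$ for some $n$, which confines $F'(0)$ to a countable, hence measure-zero, subset of $\R$. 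Fubini's theorem then implies that the exceptional set in $\R \times \R \times (0,\infty) \times (0,1)$ has four-dimensional Lebesgue measure zero.

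The main technical point I expect to require care is making the reduction to the 5D problem watertight --- in particular, verifying that the Dirichlet condition $\Psi = 0$ at $r = 0$ corresponds to the right smoothness of $v$ at the axis, and that the 5D operator on one period is self-adjoint with compact resolvent on the weighted $L^2$-space for radial functions. Fortunately, these are essentially standard consequences of the functional-analytic setup carried out in Section \ref{sec:Reformulation} and \cite{ErhardtWahlenWeber22}, so they should not require substantial new work.
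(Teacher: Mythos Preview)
Your argument is correct, but both parts take a different route from the paper's.

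For (a), the paper simply observes that $L^{\eta^{b_0,k_0}}$ has no zeroth-order term, so that when $F'(0)=0$ and $\gamma'(0)\le 0$ the full operator in \eqref{ass:zero_eigenvalue_2} has nonpositive zeroth-order coefficient and the weak maximum principle immediately gives triviality of the Dirichlet kernel. Your energy identity on one period of $\Omega^\I$ is the variational counterpart of the same fact; it is slightly longer to write out but has the advantage of being completely self-contained and not requiring any subtlety about which form of the maximum principle applies on the periodic strip.

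For (b), the paper invokes analytic dependence of the operator on $(\gamma'(0),F'(0),b_0,k_0)$: since by (a) the assumption holds at \emph{some} parameter value, the zero set of the relevant analytic function of parameters is a proper analytic variety and hence null. Your approach instead freezes $(\gamma'(0),b_0,k_0)$, uses compact resolvent to get a discrete spectrum, and notes that $F'(0)^2$ can hit at most countably many values; Fubini then finishes. This is more elementary in that it avoids analytic perturbation theory entirely and does not even need part (a) as input. The paper's route, on the other hand, yields the slightly stronger conclusion that the exceptional set lies in a countable union of analytic hypersurfaces rather than merely being Lebesgue-null. Your caveat about the functional-analytic setup for the 5D problem is well placed but, as you say, is handled by the material in Section~\ref{sec:Reformulation} and \cite{ErhardtWahlenWeber22}.
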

\begin{proof}
	Since the elliptic operator $L^{\eta^{b_0,k_0}}$ contains no zeroth order term, the weak maximum principle ensures (a). Part (b) is then obvious since the elliptic operator in \eqref{ass:zero_eigenvalue_2} depends analytically on $(\gamma'(0),F'(0),b_0,k_0)$.
\end{proof}

Finally, we turn to $\F^1_\eta(0,\eta^{b_0,k_0},0)$. For the sake of clear notation, let us abbreviate $\bar\eta\coloneqq\eta^{b_0,k_0}$. We compute
\[\F^1_\eta(0,\eta^{b_0,k_0},0)\eta=\eta-(1-\partial_z^2)^{-1}\left(\eta+\frac{\bar\eta_z\eta_z}{\bar\eta}+\frac{(1+\bar\eta_z^2)\eta}{\bar\eta^2}-\frac{3\bar\eta_{zz}\bar\eta_z\eta_z}{1+\bar\eta_z^2}\right),\]
recalling \eqref{eq:fixed_Q} and \eqref{eq:kappa}. Again, due to the structure \enquote{identity plus compact} we only have to study the kernel of $\F^1_\eta(0,\eta^{b_0,k_0},0)$ or, equivalently, after applying $1-\partial_z^2$ to the kernel equation, the kernel of the operator
\[T\coloneqq\partial_z^2+\left(\frac{\bar\eta_z}{\bar\eta}-\frac{3\bar\eta_{zz}\bar\eta_z}{1+\bar\eta_z^2}\right)\partial_z+\frac{1+\bar\eta_z^2}{\bar\eta^2}\]
acting on $C_{\per,\e}^{2,\alpha}(\R)$. Of course we can also think of $T$ as simply acting on $C_{\per,\e}^2(\R)$ since $Tv=0$, $v\in C_{\per,\e}^2(\R)$, implies that $v$ is smooth. So we have reduced everything to the study of the kernel of the linear second order ODE operator $T$. This study is the content of the following lemma, from which then Theorem \ref{thm:MainResult} follows. We point out that the proof crucially makes use of the key property \eqref{eq:kappa_monotone}.
\begin{lemma}\label{lma:T_injective}
	$T$ acting on $C_{\per,\e}^2(\R)$ has trivial kernel.
\end{lemma}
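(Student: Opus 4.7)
The plan is to construct a basis of the full (two-dimensional) kernel of $T$ viewed as an ODE operator on $C^2(\R)$: one solution $v_1$ that is $2\pi b_0$-periodic and odd, and another $v_2$ that is even but not $2\pi b_0$-periodic. Granted such a basis, any kernel element in $C_{\per,\e}^2(\R)$ has the form $\alpha v_1+\beta v_2$; the parity split will force $\alpha=0$, and then the failure of $v_2$ to be periodic will force $\beta=0$, giving the claim.

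The first step is to identify $T$ (up to the positive factor $(1+\bar\eta_z^2)^{3/2}$) with the linearisation of $\eta\mapsto\kappa[\eta]$ at $\bar\eta$; a direct differentiation of \eqref{eq:kappa} recovers exactly the three coefficients of $T$. Consequently, the tangent vector at $\tau=0$ of any smooth one-parameter family $\tau\mapsto\eta^\tau$ of profiles with $\eta^0=\bar\eta$ and $\kappa[\eta^\tau]\equiv\kappa^{b_0,k_0}$ lies in $\ker T$. Producing $v_1$ is then free: translation invariance of $\kappa[\cdot]$ makes $\bar\eta(\cdot+t)$ such a family, so $v_1:=\bar\eta_z$ satisfies $Tv_1=0$; it is $2\pi b_0$-periodic and odd (because $\bar\eta$ is $2\pi b_0$-periodic and even), and non-trivial since $k_0>0$.

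Producing $v_2$ is the step where Lemma \ref{lma:kappa_monotone} enters crucially. Because $\partial_k\kappa^{b,k}\ne 0$ at $(b_0,k_0)$, the implicit function theorem furnishes a smooth map $b\mapsto k(b)$ with $k(b_0)=k_0$ and $\kappa^{b,k(b)}\equiv\kappa^{b_0,k_0}$ for $b$ near $b_0$. The family $\eta^{b,k(b)}$ is then a smooth curve of CMC unduloids of the prescribed constant mean curvature $\kappa^{b_0,k_0}$ but with varying periods $2\pi b$, passing through $\bar\eta$ at $b=b_0$. Setting $v_2:=\partial_b|_{b=b_0}\eta^{b,k(b)}$ one immediately has $Tv_2=0$, and $v_2$ is even in $z$ since every $\eta^{b,k}$ is.

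The main obstacle I expect is to confirm that $v_2$ is not $2\pi b_0$-periodic -- this is what makes $v_1,v_2$ linearly independent (hence a basis of $\ker T$) and what drives the final step. The plan is to exploit the period mismatch directly: differentiating the identity $\eta^{b,k(b)}(z+2\pi b)=\eta^{b,k(b)}(z)$ at $b=b_0$ and using that $\bar\eta_z$ is itself $2\pi b_0$-periodic should yield
\[
v_2(z+2\pi b_0)-v_2(z)=-2\pi\,\bar\eta_z(z),
\]
whose right-hand side is not identically zero. From there the conclusion is immediate along the lines indicated in the opening paragraph. Without the strict monotonicity \eqref{eq:kappa_monotone}, the implicit function step that defines $k(b)$ is unavailable and no even kernel direction can be produced this way, which is exactly why Lemma \ref{lma:kappa_monotone} is called out as the cornerstone of the paper.
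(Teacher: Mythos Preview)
Your proposal is correct and follows essentially the same approach as the paper. Both proofs exhibit the same two kernel generators: $v_1=\bar\eta_z$ (periodic, odd) and the combination $v_2=\partial_b\eta^{b_0,k_0}-\tfrac{\partial_b\kappa^{b_0,k_0}}{\partial_k\kappa^{b_0,k_0}}\partial_k\eta^{b_0,k_0}$ (even, not periodic); your implicit-function-theorem construction of $k(b)$ simply packages this linear combination, since $k'(b_0)=-\partial_b\kappa^{b_0,k_0}/\partial_k\kappa^{b_0,k_0}$. The only cosmetic difference is in the non-periodicity step: the paper computes $\partial_b\eta^{b_0,k_0}(z)=(\bar\eta(z)-z\bar\eta_z(z))/b_0$ explicitly and observes that $z\bar\eta_z$ is not periodic, whereas you differentiate the period relation $\eta^{b,k(b)}(z+2\pi b)=\eta^{b,k(b)}(z)$ to obtain $v_2(z+2\pi b_0)-v_2(z)=-2\pi\bar\eta_z(z)\not\equiv0$, which is a slightly slicker way to reach the same conclusion.
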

\begin{proof}
	When viewed as acting on $C^2(\R)$, the linear second order ODE operator $T$ has two-dimensional kernel. We shall now construct two basis elements of this kernel and prove that one of them is periodic, but odd, and the other is even, but not periodic. From this the statement of the lemma then follows easily.
	
	Now let us recall that by \eqref{eq:kappa} the unduloid profile $\eta^{b,k}$, for any $b>0$, $k\in(0,1)$, satisfies
	\begin{align}\label{eq:unduloid_kappa}
		\frac{\eta^{b,k}_{zz}}{\left(1+(\eta^{b,k}_z)^2\right)^{3/2}}-\frac{1}{\eta^{b,k}\sqrt{1+(\eta^{b,k}_z)^2}}=\kappa^{b,k}=\text{const.}
	\end{align}
	On the one hand, differentiating \eqref{eq:unduloid_kappa} with respect to $z$ at the profile $\bar\eta=\eta^{b_0,k_0}$ gives
	\[\frac{T\bar\eta_z}{\left(1+\bar\eta_z^2\right)^{3/2}}=0.\]
	Thus, the first basis element of the kernel of $T$ is $\bar\eta_z$, which is periodic and odd.
	
	On the other hand, differentiating \eqref{eq:unduloid_kappa} with respect to $b$ and $k$, respectively, at the profile $\bar\eta=\eta^{b_0,k_0}$ gives
	\[\frac{T\partial_b\eta^{b_0,k_0}}{\left(1+\bar\eta_z^2\right)^{3/2}}=\partial_b\kappa^{b_0,k_0}\quad\text{and}\quad\frac{T\partial_k\eta^{b_0,k_0}}{\left(1+\bar\eta_z^2\right)^{3/2}}=\partial_k\kappa^{b_0,k_0},\]
	respectively. From these two relations it easily follows that
	\[Tv=0,\quad v\coloneqq\partial_b\eta^{b_0,k_0}-\frac{\partial_b\kappa^{b_0,k_0}}{\partial_k\kappa^{b_0,k_0}}\partial_k\eta^{b_0,k_0}.\]
	Notice that here we make use of the key property \eqref{eq:kappa_monotone} proved in Lemma \ref{lma:kappa_monotone} in order to be able to divide by $\partial_k\kappa^{b_0,k_0}$. It is clear that $v$ is even since all $\eta^{b,k}$, $b>0$, $k\in(0,1)$, are even. So it remains to show that $v$ is not periodic. First, by differentiating \eqref{eq:eta_bk} with respect to $b$ at $(b,k)=(b_0,k_0)$ we see that
	\[\partial_b\eta^{b_0,k_0}(z)=\frac{\bar\eta(z)-z\bar\eta_z(z)}{b_0}.\]
	Here, $\bar\eta$ is clearly periodic, but $z\bar\eta_z$ is not. Moreover, $\partial_k\eta^{b_0,k_0}$ is certainly periodic since varying $k$ and keeping $b$ fixed does not change the period of the unduloid profile -- this is exactly why we introduced the parameters $b$ and $k$ in Section \ref{sec:Unduloids}. As a consequence, $v$ is not periodic and the proof is complete.
\end{proof}
\begin{remark}\label{rem:k0=0}
	Let us finally revisit the above arguments in case $k_0=0$, that is, when the unduloid is a flat cylinder, while still assuming \eqref{ass:zero_eigenvalue}. Noticing that $\eta^{b_0,0}\equiv b_0$, the ODE operator $T=\partial_z^2+1/b_0^2$ acting on $C_{\per,\e}^2(\R)$ now \emph{does} have a kernel, spanned by $\cos(\cdot/b_0)$. We then see that $\F_{(\eta,\phi)}(0,\eta^{b_0,0},0)$ has one-dimensional kernel spanned by $(\cos(\cdot/b_0),0)$, suggesting a bifurcation from the one-parameter family of trivial solutions. However, a direct computation shows that the transversality condition in the spirit of Crandall--Rabinowitz is violated. Notice anyway that static unduloid configurations nearby, that is, with profile $\eta^{b_0,k}$, $0<k\ll1$, are \emph{not} such bifurcating solutions. This is because, again, $Q=Q^{b_0,0}=\sigma/b_0$ is fixed to its value at the flat cylinder in the functional-analytic setup, while recalling \eqref{eq:kappa_monotone}. Notice also that $\partial_k\kappa^{b_0,0}=0$, and that it is easy to see that both $\cos(\cdot/b_0)$ and $\partial_k\eta^{b_0,0}$ generate the kernel of $T$ and are thus multiples of each other.
\end{remark}

\small{\textbf{Acknowledgements.} This project has received funding from the Swedish Research Council (grant no. 2020-00440). EW and JW were supported by the Swedish Research Council under grant no. 2021-06594 while in residence at Institut Mittag-Leffler in Djursholm, Sweden during the fall semester of 2023.

\bibliographystyle{siam}
\bibliography{ref_unduloid}

\end{document}